\providecommand{\U}[1]{\protect \rule{.1in}{.1in}}
\newtheorem{theorem}{Theorem}
\newtheorem{lemma}[theorem]{Lemma}
\newtheorem{proposition}[theorem]{Proposition}
{\theorembodyfont{\rmfamily}

}
\newenvironment{proof}[1][Proof]{\noindent \textbf{#1.} }{\  \rule{0.5em}{0.5em}\vskip0.3truecm}
\begin{document}

\title{Representative functions of maximal monotone operators and bifunctions}
\author{M. Bianchi\thanks{Universit{\`a} Cattolica del Sacro Cuore, Milano,
Italy\texttt{(monica.bianchi@unicatt.it)}}, N. Hadjisavvas\thanks{King Fahd
University of Petroleum and Minerals, Kingdom of Saudi
Arabia\texttt{(nhadjisavvas@kfupm.edu.sa)}}, R. Pini\thanks{Universit{\`a}
degli Studi di Milano--Bicocca, Italy\texttt{(rita.pini@unimib.it)}}}
\maketitle

\begin{abstract}
\noindent The aim of this paper is to show that every representative function
of a maximal monotone operator is the Fitzpatrick transform of a bifunction
corresponding to the operator. In this way we exhibit the relation between the
recent theory of representative functions, and the much older theory of saddle
functions initiated by Rockafellar.

\end{abstract}

%\thanks{Part of
%this work was done when the third author was
%visiting the Catholic University of Milan, Italy. The author wishes to thank the University
%for its hospitality.}}

\noindent \textbf{Keywords}: Maximal monotonicity; Fitzpatrick function;
representative function; Fitzpatrick transform; Fenchel conjugate
\vskip0.1truecm \noindent \textbf{MSC}: Primary: 47H05 Secondary: 47H04; 49J53; 90C33

\section{Introduction}

Given a maximal monotone operator $T$ in a Banach space $X$, a class
$\mathcal{H}(T)$ of convex, lower semicontinuous functions on the product
space $X\times X^{\ast}$ was introduced by Fitzpatrick \cite{fitz}, that
represent $T$ in the following sense: each function $\varphi \in \mathcal{H}(T)$
determines exactly the graph of $T$ as the set of coincidence of $\varphi$
with the usual duality product. The class $\mathcal{H}(T)$ has a minimum
element, the so-called Fitzpatrick function. The theory of representative
functions has proven to be very fruitful, and has lead to major advances in
the theory of maximal monotone operators.

On the other hand, to every maximal monotone operator corresponds a class of
bifunctions defined on the product $X\times X$. It had been shown that
bifunctions, apart from being an interesting object of study in themseleves,
especially in relation with equilibrium problems, are also useful for the
study of maximal monotone operators. Actually, to every monotone operator
corresponds a class of bifunctions such that, in some sense, the operator is
the subdifferential of the bifunctions (see \cite{Iusem} for details). To each
such bifunction, one defines its Fitzpatrick transform \cite{Bo-Gr}. It has
been shown that the Fitzpatrick transform of every bifunction corresponding to
a maximal monotone operator, is a representative function of the operator
\cite{AH2}. One of the aims of the present paper, it to answer the following
question: Does every representative function of a maximal monotone operator
arise in this way? In other words, given a representative function, does there
exist a bifunction corresponding to the operator, such that its Fitzpatrick
transform is the given representative function? As we will see, the answer is
yes, and in fact one may find all such bifunctions. In addition, these
bifunctions may be chosen to be \textquotedblleft saddle functions", i.e.,
concave in the first variable and convex in the second one. Our results
establish a close connection between the recent theory of representative
functions and the much older theory of saddle functions by Rockafellar
\cite{Rock-70b, Rock-saddle}, Krauss \cite{Krauss-Roum, Krauss-nonlin} etc. In
fact, some of our results are not really new; what is new is their connection
with the theory of maximal monotone operators and the Fitzpatrick function.

\section{Preliminaries}

Let $X$ be a real Banach space and $X^{\ast}$ its topological dual. Denote by
$\pi$ the duality product $\pi(x,x^{\ast})=\langle x^{\ast},x\rangle.$ We will
use the weak$^{\ast}$ topology in $X^{\ast}$, so its dual with respect to this
topology is $X$. The space $X\times X^{\ast}$ is endowed with the product
topology, so its dual is $X^{\ast}\times X$ with the canonical duality pairing
defined by
\[
\left \langle (x^{\ast},x),(y,y^{\ast})\right \rangle =\langle x^{\ast}%
,y\rangle+\langle y^{\ast},x\rangle.
\]
Given a subset $K$ of $X$ we will denote by $\mathrm{co}K$ and $\overline
{\mathrm{{co}}}K$ its convex hull and closed convex hull, respectively;
moreover, we will denote by $\delta_{K}$ the indicator function of $K,$ i.e.,
\[
\delta_{K}(x):=%
\begin{cases}
+\infty & \text{if $x\notin K$,}\\
0 & \text{if $x\in K$.}%
\end{cases}
\]
In the following we will denote by $\overline{\mathbb{R}}$ the set
$\mathbb{R}\cup \{-\infty,+\infty \}$.

\subsection{Some elements of convex analysis}

In the sequel we recall some definitions according to \cite{Rock-70b}; it
should be noted that some definitions (such as closedness) differ from
definitions found in other sources. Given a function $f:X\rightarrow
\overline{\mathbb{R}}$, its domain and epigraph are, respectively, the sets
$\operatorname*{dom}f=\{x\in X:f(x)<+\infty \}$ and $\operatorname*{epi}%
f=\{(x,\mu)\in X\times \mathbb{R}:f(x)\leq \mu \}$. The function $f$ is called
convex if $\operatorname*{epi}f$ is convex. The convex hull $\mathrm{co}\,f$
of a function $f$ is the function which is the greatest convex minorant of
$f$. Equivalently,%
\begin{align*}
\mathrm{co}\,f(x)  &  =\inf \{ \mu:(x,\mu)\in \mathrm{co}(\operatorname*{epi}%
f)\} \\
&  =\inf \{ \sum_{i=1}^{m}\lambda_{i}f(x_{i}):\sum_{i=1}^{m}\lambda_{i}%
x_{i}=x,\;x_{i}\in \operatorname*{dom}f,\; \sum_{i=1}^{m}\lambda_{i}=1,\;
\lambda_{i}\geq0\}.
\end{align*}

If $f$ is convex, its \textit{closure} $\overline{f}$ is defined as the
pointwise supremum of all continuous affine functions majorized by $f$:%
\[
\overline{f}=\sup \{h:h\text{ is continuous affine, }h\leq f\}.
\]

If $f$ is convex and never takes the value $-\infty$, its closure
$\overline{f}$ is the greatest lower semicontinuous (lsc) convex minorant of
$f$; it is the function whose epigraph is the closure of $\operatorname*{epi}%
f$. However, if $f$ is convex and $f(x)=-\infty$ for some $x,$ then
$\overline{f}\equiv-\infty$. A convex function is said to be \textit{closed}
if $\overline{f}=f$. A convex function $f:X\rightarrow \overline{\mathbb{R}}$
is called \textit{proper} if $f(x)>-\infty,$ for any $x\in X$, and it is not
identically equal to $+\infty$. For a proper convex function, closedness is
the same as lower semicontinuity. For every function $f$, we denote by
$\overline{\mathrm{co}}f$ the function $\overline{\mathrm{co}f}.$

The \textit{convex conjugate} $f^{\ast}:X^{\ast}\rightarrow \overline
{\mathbb{R}}$ of a function $f:X\rightarrow \overline{\mathbb{R}}$ is given by
\[
f^{\ast}(x^{\ast}):=\sup_{x\in X}\{ \langle x^{\ast},x\rangle-f(x)\}.
\]
The function $f^{\ast}$ is closed and convex, and it is proper if and only if
$f$ is proper. Moreover, $(\overline{\mathrm{co}}f)^{\ast}=\left(
\overline{f}\right)  ^{\ast}=f^{\ast}.$ In this paper, the convex conjugate of
a function $g:X^{\ast}\rightarrow \overline{\mathbb{R}}$ will be meant to be
defined in $X$ rather than $X^{\ast \ast}$. For every function $f$,
$f^{\ast \ast}=\overline{\mathrm{co}}f$.

For any function $f:X\rightarrow \overline{\mathbb{R}},$ the well-known Fenchel
inequality holds:
\begin{equation}
f^{\ast}(x^{\ast})\geq \langle x^{\ast},x\rangle-f(x)\text{ for all }x\in
X,x^{\ast}\in X^{\ast}. \label{Fenchel_inequality}%
\end{equation}

A function $f:X\rightarrow \overline{\mathbb{R}}$ is called concave if $-f$ is
convex. Given a function $f$, its concave hull $\mathrm{cv}f$ is the function
$\mathrm{cv}f=-\mathrm{co}(-f)$, i.e. the smallest concave majorant of $f$.
Equivalently,%
\[
\mathrm{cv}f(x)=\sup \{ \sum_{i=1}^{m}\lambda_{i}f(x_{i}):\sum_{i=1}^{m}%
\lambda_{i}x_{i}=x,\;f(x_{i})>-\infty,\; \sum_{i=1}^{m}\lambda_{i}=1,\;
\lambda_{i}\geq0\}.
\]

If $f$ is concave, its closure is by definition the function $\overline
{f}=-\overline{(-f)}$. In this case,%
\[
\overline{f}=\inf \{h:h\text{ is continuous affine, }h\geq f\}.
\]

\subsection{Monotone operators and representative functions}

Given a multivalued operator $T:X\rightrightarrows X^{\ast}$, we recall that
its domain and graph are, respectively, the sets $D(T)=\{x\in X:\,T(x)\neq
\emptyset \}$ and $\operatorname*{gph}T=\{(x,x^{\ast})\in X\times X^{\ast
}:x^{\ast}\in T(x)\}$.

In the sequel, we will assume that $D(T)\neq \emptyset$.

The multivalued operator $T$ is called \textit{monotone} if for any $x,y\in
D(T)$ the inequality $\langle x^{\ast}-y^{\ast},x-y\rangle \geq0$ holds
whenever $x^{\ast}\in T(x)$ and $y^{\ast}\in T(y)$. In particular, the
monotone operator $T$ is called \textit{maximal} if its graph is not properly
included in the graph of any other monotone operator.

We recall that if $T$ is maximal monotone and $X^{\ast}$ is reflexive, then
$\overline{D(T)}$ is convex, so $\overline{\operatorname*{co}}D(T)=\overline
{D(T)}$ \cite{Rock-conv}.

Given a multivalued operator $T$, the class $\mathcal{H}(T)$ of
\textit{representative functions} of $T$ is defined as the class of all closed
and convex functions $\varphi:X\times X^{\ast}\rightarrow \overline{\mathbb{R}%
}$ such that:
\[%
\begin{cases}
\varphi(x,x^{\ast})\geq \langle x^{\ast},x\rangle,\text{ for all }(x,x^{\ast
})\in X\times X^{\ast}\\
(x,x^{\ast})\in \mathrm{gph}T\Rightarrow \varphi(x,x^{\ast})=\langle x^{\ast
},x\rangle
\end{cases}
\]
Since we assume that $\mathrm{gph}T\neq \emptyset,$ then any representative
function is proper and thus, closedness is equivalent to lsc. With respect to
each of its variables, $\varphi$ might be improper, but it is still convex and closed.

To any operator $T:X\rightrightarrows X^{\ast}$, one associates its
\emph{Fitzpatrick function} \cite{fitz} $\mathcal{F}_{T}:X\times X^{\ast
}\rightarrow \mathbb{R}\cup \{+\infty \}$ defined by
\begin{align*}
\mathcal{F}_{T}(x,x^{\ast})  &  =\; \sup_{(y,y^{\ast})\in \operatorname*{gph}%
T}(\langle y^{\ast}-x^{\ast},x-y\rangle+\langle x^{\ast},x\rangle)\\
&  =\; \sup_{(y,y^{\ast})\in \operatorname*{gph}T}\left(  \left \langle x^{\ast
},y\right \rangle +\left \langle y^{\ast},x-y\right \rangle \right)  .
\end{align*}
The Fitzpatrick function $\mathcal{F}_{T}$ is convex and lsc with respect to
the pair $(x,x^{\ast})$. For any maximal monotone operator $T,$ the function
$\mathcal{F}_{T}$ belongs to $\mathcal{H}(T)$, and is in fact the smallest
function of this family. In addition, for every $\varphi \in \mathcal{H}(T)$,
the equality $\varphi(x,x^{\ast})=\langle x^{\ast},x\rangle$ characterizes the
points in the graph of $T$.

On the other hand, the function $\sigma_{T}:X\times X^{\ast}\rightarrow
\overline{\mathbb{R}}$ defined by
\[
\sigma_{T}(x,x^{\ast}):=\overline{\mathrm{co}}\left(  \pi+\delta
_{\mathrm{gph}T}\right)  (x,x^{\ast})
\]
is the greatest representative function in $\mathcal{H}(T),$ if $T$ is maximal
monotone \cite{BuSva}.

The function $\sigma_{T}$ is connected to the Fitzpatrick function via the
following equalities:%
\[
\mathcal{F}_{T}(x,x^{\ast})=\sigma_{T}^{\ast}(x^{\ast},x),\qquad
\mathcal{F}_{T}^{\ast}(x^{\ast},x)=\sigma_{T}(x,x^{\ast})
\]
(see for instance \cite{BuSva, Mle-Sv}).

In case of maximal monotone operators, the transpose of the conjugate of any
representative function $\varphi$, i.e. the function $(\varphi^{*})^{t}$
defined by $(\varphi^{*})^{t}(x,x^{*})=\varphi^{*}(x^{*},x),$ where
\[
\varphi^{\ast}(x^{\ast},x)=\sup_{(y,y^{\ast})\in X\times X^{\ast}}(\langle
x^{\ast},y\rangle+\langle y^{\ast},x\rangle-\varphi(x,x^{\ast})),
\]
is also a representative function of $T$ \cite{BuSva}.

Given a representative function $\varphi$, its domain is a subset of $X\times
X^{\ast}$. We will denote by $P_{1}\operatorname*{dom}\varphi$ the projection
of $\operatorname*{dom}\varphi$ on $X$, i.e.,
\[
P_{1}\operatorname*{dom}\varphi=\{x\in X:\exists x^{\ast}\in X^{\ast}\text{
such that }\varphi(x,x^{\ast})<+\infty \}.
\]

\begin{proposition}
\label{domain(T)} Let $\varphi$ be a representative function of some operator
$T$ and let $x\in X$ be given.

\begin{enumerate}
\item[a.] $\operatorname*{co}D(T)\subseteq P_{1}\mathrm{dom}\varphi$.

\item[b.] If $T$ is maximal monotone, then $P_{1}\mathrm{dom}\varphi
\subseteq \overline{\operatorname*{co}}D(T)$. If in addition
$\operatorname*{int}\operatorname*{co}D(T)\neq \emptyset$, then
$\operatorname*{int}D(T)=\operatorname*{int}P_{1}\mathrm{dom}\varphi$.
\end{enumerate}
\end{proposition}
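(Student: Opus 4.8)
The plan is to sandwich $P_{1}\operatorname{dom}\varphi$ between $\operatorname{co}D(T)$ and $\overline{\operatorname{co}}D(T)$, and then pass to interiors.

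For part (a), I would first observe that $D(T)\subseteq P_{1}\operatorname{dom}\varphi$. Indeed, if $x\in D(T)$ and $x^{\ast}\in T(x)$, then $(x,x^{\ast})\in\operatorname{gph}T$, so the defining property of a representative function forces $\varphi(x,x^{\ast})=\langle x^{\ast},x\rangle<+\infty$, whence $x\in P_{1}\operatorname{dom}\varphi$. Since $\varphi$ is convex, $\operatorname{dom}\varphi$ is a convex subset of $X\times X^{\ast}$, and its image $P_{1}\operatorname{dom}\varphi$ under the linear projection $(x,x^{\ast})\mapsto x$ is again convex. A convex set containing $D(T)$ must contain the smallest such set, $\operatorname{co}D(T)$, which gives (a). This part uses nothing beyond convexity of $\varphi$ and the representative property on the graph.

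For the first inclusion in (b), I would reduce to the Fitzpatrick function. Since $T$ is maximal monotone, $\mathcal{F}_{T}$ is the smallest element of $\mathcal{H}(T)$, so $\mathcal{F}_{T}\le\varphi$ pointwise, hence $\operatorname{dom}\varphi\subseteq\operatorname{dom}\mathcal{F}_{T}$ and $P_{1}\operatorname{dom}\varphi\subseteq P_{1}\operatorname{dom}\mathcal{F}_{T}$. It therefore suffices to show $P_{1}\operatorname{dom}\mathcal{F}_{T}\subseteq\overline{\operatorname{co}}D(T)$, which I would prove by contraposition: fix $x_{0}\notin\overline{\operatorname{co}}D(T)$ and show $\mathcal{F}_{T}(x_{0},x^{\ast})=+\infty$ for every $x^{\ast}$. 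By Hahn--Banach separation of the point $x_{0}$ from the closed convex set $\overline{\operatorname{co}}D(T)$, there exist $z^{\ast}\in X^{\ast}$ and $\gamma>0$ with $\langle z^{\ast},x_{0}-y\rangle\ge\gamma$ for all $y\in D(T)$. Suppose, for contradiction, that $\mathcal{F}_{T}(x_{0},x^{\ast})=M<+\infty$; then $\langle x^{\ast},y\rangle+\langle y^{\ast},x_{0}-y\rangle\le M$ for all $(y,y^{\ast})\in\operatorname{gph}T$. The key step, which I expect to be the main obstacle, is to show that the single point $(x_{0},x^{\ast}+\lambda z^{\ast})$ can be appended to the graph for $\lambda$ large: a direct computation gives $\langle x^{\ast}+\lambda z^{\ast}-y^{\ast},\,x_{0}-y\rangle\ge\langle x^{\ast},x_{0}\rangle+\lambda\gamma-M$ for every $(y,y^{\ast})\in\operatorname{gph}T$, which is nonnegative once $\lambda\ge(M-\langle x^{\ast},x_{0}\rangle)/\gamma$. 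Hence $\{(x_{0},x^{\ast}+\lambda z^{\ast})\}\cup\operatorname{gph}T$ is monotone, and maximality forces $(x_{0},x^{\ast}+\lambda z^{\ast})\in\operatorname{gph}T$, so $x_{0}\in D(T)\subseteq\overline{\operatorname{co}}D(T)$, a contradiction. Thus $\mathcal{F}_{T}(x_{0},\cdot)\equiv+\infty$ and the inclusion follows. The virtue of this route is that the feared ``blow-up of $y^{\ast}$ at the boundary of the domain'' is replaced by a clean maximality argument.

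Finally, for the interior equality I would combine (a) with the inclusion just established to get the chain $\operatorname{co}D(T)\subseteq P_{1}\operatorname{dom}\varphi\subseteq\overline{\operatorname{co}}D(T)$ of convex sets. Passing to interiors, which is monotone, yields $\operatorname{int}\operatorname{co}D(T)\subseteq\operatorname{int}P_{1}\operatorname{dom}\varphi\subseteq\operatorname{int}\overline{\operatorname{co}}D(T)$. Under the hypothesis $\operatorname{int}\operatorname{co}D(T)\neq\emptyset$, the elementary fact that a convex set with nonempty interior shares its interior with its closure gives $\operatorname{int}\overline{\operatorname{co}}D(T)=\operatorname{int}\operatorname{co}D(T)$, so the two outer terms coincide and $\operatorname{int}P_{1}\operatorname{dom}\varphi=\operatorname{int}\operatorname{co}D(T)$. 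The remaining ingredient, which I would invoke as a classical property of maximal monotone operators (Rockafellar), is the identity $\operatorname{int}D(T)=\operatorname{int}\operatorname{co}D(T)$ whenever this interior is nonempty; reliance on this external theorem is the only non-self-contained point of the argument. Combining the last two equalities gives $\operatorname{int}D(T)=\operatorname{int}P_{1}\operatorname{dom}\varphi$, as required.
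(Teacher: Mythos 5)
Your proof is correct and follows essentially the same route as the paper's: projection of the convex set $\operatorname*{dom}\varphi$ for part (a), and for part (b) a Hahn--Banach separation of the point from $\overline{\operatorname*{co}}D(T)$ followed by scaling the separating functional, adding it to $x^{\ast}$, and invoking maximality of $T$ to force the augmented pair into $\operatorname*{gph}T$ (the paper absorbs your parameter $\lambda$ into the choice of $v^{\ast}$ and bounds via $\mathcal{F}_{T}\leq\varphi$ rather than passing explicitly through $P_{1}\operatorname*{dom}\mathcal{F}_{T}$, but the mechanism is identical). The interior argument, including the appeal to Rockafellar's identity $\operatorname*{int}D(T)=\operatorname*{int}\operatorname*{co}D(T)$, also matches the paper.
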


\begin{proof}
a. Let $x\in D(T)$. Then there exists $x^{\ast}\in T(x)$, thus $\varphi
(x,x^{\ast})=\left \langle x^{\ast},x\right \rangle \in \mathbb{R}$. Hence
$D(T)\subseteq P_{1}\mathrm{dom}\varphi$. Since $P_{1}\mathrm{dom}\varphi$ is
the projection of a convex set, it is convex, thus the inclusion
$\operatorname*{co}D(T)\subseteq P_{1}\mathrm{dom}\varphi$ follows.

b. Let $x\in P_{1}\mathrm{dom}\varphi$. Then there exists $x^{\ast}\in
X^{\ast}$ such that $\varphi(x,x^{\ast})\in \mathbb{R}$. Assume that
$x\notin \overline{\operatorname*{co}}D(T)$; then there exist $\varepsilon>0$
and $v^{\ast}\in X^{\ast}$ such that $\left \langle v^{\ast},x-y\right \rangle
>\varepsilon$ for all $y\in D(T)$. We can choose $v^{\ast}$ so that%
\[
\left \langle v^{\ast},x-y\right \rangle \geq \varphi(x,x^{\ast})-\left \langle
x^{\ast},x\right \rangle ,\quad \forall y\in D(T).
\]

Since the Fitzpatrick function $\mathcal{F}_{T}$ is the minimum element of the
class of representative functions, for all $(y,y^{\ast})\in \operatorname{gph}%
T$ we obtain%
\[
\left \langle x^{\ast}-y^{\ast},y-x\right \rangle +\left \langle x^{\ast
},x\right \rangle \leq \mathcal{F}_{T}(x,x^{\ast})\leq \varphi(x,x^{\ast}).
\]

It follows that%
\[
\left \langle (x^{\ast}+v^{\ast})-y^{\ast},x-y\right \rangle \geq0,\quad
\forall(y,y^{\ast})\in \operatorname{gph}T.
\]

Since $T$ is maximal monotone, $x^{\ast}+v^{\ast}\in T(x)$, contradicting
$x\notin \overline{\operatorname*{co}}D(T)$.

To show the equality of the interiors, we remark that $\operatorname*{co}%
D(T)\subseteq P_{1}\mathrm{dom}\varphi \subseteq \overline{\operatorname*{co}%
}D(T)$ implies that $\operatorname*{int}D(T)\subseteq \operatorname*{int}%
\operatorname*{co}D(T)\subseteq \operatorname*{int}P_{1}\mathrm{dom}%
\varphi \subseteq \operatorname*{int}\overline{\operatorname*{co}}D(T)$.
If\linebreak$\operatorname*{int}\operatorname*{co}D(T)\neq \emptyset$, then
$\operatorname*{int}\operatorname*{co}D(T)=\operatorname*{int}\overline
{\operatorname*{co}}D(T)$. In addition, it is known that $\operatorname*{int}%
D(T)=\operatorname*{int}\operatorname*{co}D(T)$ \cite{Rock-local}, so we
obtain $\operatorname*{int}D(T)=\operatorname*{int}P_{1}\mathrm{dom}\varphi$.
\end{proof}

See also \cite{SimZa} for the inclusion $\operatorname*{co}D(T)\subseteq
P_{1}\mathrm{dom}\mathcal{F}_{T}$, and \cite[Theorem 2.2]{Sim} for the
equality $\operatorname*{int}D(T)=\operatorname*{int}P_{1}\mathrm{dom}%
\mathcal{F}_{T}$.

Note that in general $\operatorname*{co}D(T)\neq P_{1}\mathrm{dom}\varphi
\neq \overline{\operatorname*{co}}D(T)$, as seen in the following example. Let
$T:(0,1)\rightarrow \mathbb{R}$ be a continuous increasing function such that
$T(x)=\frac{1}{1-x}$ near $1$ and $T(x)=-\frac{1}{x^{2}}$ near $0$. Then $T$
is maximal monotone, and for every $x^{\ast}\geq0$,%
\[
\mathcal{F}_{T}(1,x^{\ast})=\sup_{y\in(0,1)}(T(y)+x^{\ast}y-yT(y))\leq
\sup_{y\in(0,1)}T(y)(1-y)+x^{\ast}<+\infty
\]
while for every $x^{\ast}\in \mathbb{R}$,%
\[
\mathcal{F}_{T}(0,x^{\ast})=\sup_{y\in(0,1)}(x^{\ast}y-yT(y))=+\infty.
\]

Hence $D(T)\neq P_{1}\mathrm{dom}\mathcal{F}_{T}=(0,1]\neq \overline
{\operatorname*{co}}D(T)$.

\subsection{Bifunctions and saddle functions}

\label{sec_bifunction}

By the term \emph{bifunction} we understand any function $F:X\times
X\rightarrow{\overline{\mathbb{R}}}$. A bifunction $F\ $is said to be
\emph{normal} if there exists a nonempty set $C\subseteq X$ such that
$F(x,y)=-\infty$ if and only if $x\notin C.$ The set $C$ will be called the
\emph{domain} of $F$ and denoted by $D(F)$. In particular, if $F$ is normal,
then $F$ is not identically $-\infty$.

The bifunction $F$ is said to be monotone if
\[
F(x,y)\leq-F(y,x)
\]
for all $x,y\in X$. Every monotone bifunction satisfies the inequality
$F(x,x)\leq0$, for all $x\in X$.

Given a bifunction $F$, we define the operator $A^{F}:X\rightrightarrows
X^{\ast}$ by
\[
A^{F}(x)=\{x^{\ast}\in X^{\ast}:F(x,y)\geq \langle x^{\ast},y-x\rangle,\;
\forall y\in X\}.
\]
Note that, if $F$ is normal, then $D(A^{F})\subseteq D(F)$, and
\begin{equation}
F(x,x)\geq0\quad \forall x\in D(A^{F}). \label{Fge0}%
\end{equation}
It is easy to check that the operator $A^{F}$ is monotone whenever $F$ is a
monotone bifunction; moreover, $F(x,x)=0$ for all $x\in D(A^{F})$. The
converse is not true: $A^{F}$ may be monotone while $F$ is not. See
\cite{HJML} for examples, and Proposition \ref{Fmon} below.

On the other hand, given an operator $T$ one can define the bifunction
$G_{T}:X\times X\rightarrow{\overline{\mathbb{R}}}$ by
\begin{equation}
G_{T}(x,y)=\sup_{x^{\ast}\in T(x)}\langle x^{\ast},y-x\rangle. \label{G_T}%
\end{equation}
The bifunction $G_{T}$ is normal and $D(G_{T})=D(T)$; furthermore
$G_{T}(x,x)=0$ for all $x\in D(T),$ and $G_{T}(x,\cdot)$ is closed and convex
for all $x\in X$. If $T$ is a monotone operator, then $G_{T}$ is a monotone bifunction.

We can associate to each bifunction $F$ its \textit{Fitzpatrick transform}
\begin{equation}
\varphi_{F}(x,x^{\ast})=\sup_{y\in X}\left(  \langle x^{\ast},y\rangle
+F(y,x)\right)  =(-F(\cdot,x))^{\ast}(x^{\ast}), \label{Fitzpatrick-transform}%
\end{equation}
i.e., $\varphi_{F}$ is the conjugate of $-F$ with respect to its first
variable (see, for instance, \cite{AH2} and \cite{Bo-Gr}).

If $F(y,\cdot)$ is lsc and convex for all $y\in X,$ then $\varphi_{F}$ is also
lsc and convex on $X\times X^{\ast}$. Moreover, if $F$ is normal, then
\[
\varphi_{F}(x,x^{\ast})=\sup_{y\in X}\left(  \langle x^{\ast},y\rangle
+F(y,x)\right)  =\sup_{y\in D(F)}\left(  \langle x^{\ast},y\rangle
+F(y,x)\right)  ;
\]
this implies that $\varphi_{F}(x,x^{\ast})>-\infty$ for all $(x,x^{\ast})\in
X\times X^{\ast}$, and $\varphi_{F}$ is closed.

Note that, for any operator $T$, the following equality holds:
\begin{align}
\mathcal{F}_{T}(x,x^{\ast})  &  =\sup_{(y,y^{\ast})\in \operatorname*{gph}%
T}\left(  \left \langle x^{\ast},y\right \rangle +\left \langle y^{\ast
},x-y\right \rangle \right) \nonumber \\
&  =\sup_{y\in X}\left(  \langle x^{\ast},y\rangle+\sup_{y^{\ast}\in
T(y)}\langle y^{\ast},x-y\rangle \right) \nonumber \\
&  =\sup_{y\in D(T)}\left(  \langle x^{\ast},y\rangle+G_{T}(y,x)\right)
\nonumber \\
&  =\varphi_{G_{T}}(x,x^{\ast}). \label{FT}%
\end{align}

Given a bifunction $F,$ one can associate to $F$ also the \emph{upper
Fitzpatrick transform} $\varphi^{F}$ given by
\begin{equation}
\varphi^{F}(x,x^{\ast})=\sup_{y\in X}\left(  \left \langle x^{\ast
},y\right \rangle -F(x,y)\right)  =F(x,\cdot)^{\ast}(x^{\ast})\text{,}
\label{Fitzp-upper}%
\end{equation}
and the operator $\,^{F}\!A,$ given by
\[
\,^{F}\!A(x)=\{x^{\ast}\in X^{\ast}:\,-F(y,x)\geq \langle x^{\ast}%
,y-x\rangle,\quad \forall y\in X\}
\]
(see for instance \cite{AH2}, \cite{Bo-Gr}).

A class of bifunctions widely used in mathematical literature is the class of
\emph{saddle} functions, i.e., bifunctions which are concave in the first
argument, and convex in the second one (see, for instance, \cite{Rock-70b}).
For these functions, let us recall some basic definitions.

One denotes by $\mathrm{cl}_{2}F$ the bifunction obtained by closing
$F(x,\cdot)$ as a convex function, for every $x\in X;$ likewise, one denotes
by $\mathrm{cl}_{1}F$ the bifunction obtained by closing $F(\cdot,y)$ as a
concave function, for every $y\in X.$

Two saddle functions $F,H$ are called equivalent if $\mathrm{cl}%
_{i}F=\mathrm{cl}_{i}H$, $i=1,2$; in this case we write $F\sim H$. Clearly,
$\sim$ is an equivalence relation. A saddle function $F$ is called
\emph{closed} if $\mathrm{cl}_{1}F\sim \mathrm{cl}_{2}F\sim F$. It is called
\emph{lower closed} if $\mathrm{cl}_{2}\mathrm{cl}_{1}F=F$, and \emph{upper
closed} if $\mathrm{cl}_{1}\mathrm{cl}_{2}F=F$. It is easy to see that every
lower closed and every upper closed saddle function is closed. Also, if $F\sim
H$ and $F$ is closed, then $H$ is closed too.

Given a saddle function $F$, we define following \cite{Krauss-Roum}
\[
\mathrm{dom}_{1}F=\{x\in X:\, \mathrm{cl}_{2}F(x,y)>-\infty,\quad \forall y\in
X\},
\]
and
\[
\mathrm{dom}_{2}F=\{y\in X:\, \mathrm{cl}_{1}F(x,y)<+\infty,\quad \forall x\in
X\}.
\]
Note that, if $F$ is a saddle function such that $\mathrm{cl}_{2}F=F$ and $F$
is not identically $-\infty,$ then $F$ is normal, and $\mathrm{dom}%
_{1}F=D(F).$ Moreover, if $F$ is a saddle function, such that $\mathrm{cl}%
_{1}F=F$ and $F$ is not identically $+\infty,$ then $(x,y)\mapsto
-F(y,x)=\hat{F}(x,y)$ is normal, and $\mathrm{dom}_{2}F=D(\hat{F}).$

The next proposition shows that the quantities $\varphi_{F}$, $\varphi^{F}$,
$A^{F}$ and $^{F}\!A$ depend only on the equivalent class to which the saddle
function $F$ belongs.

\begin{proposition}
\label{Prop-ex-rem}Two saddle functions $F$ and $H$ are equivalent if and only
if $\varphi_{F}=\varphi_{H}$ and $\varphi^{F}=\varphi^{H}$. In addition, if
$F$ and $H$ are equivalent then $A^{H}=A^{F}$ and $^{H}\!A=\,^{F}\!A$.
\end{proposition}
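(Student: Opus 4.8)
The plan rests on a single principle recalled in the preliminaries: the convex conjugate is insensitive to closure, since $(\overline{f})^{\ast}=f^{\ast}$ and $f^{\ast\ast}=\overline{\mathrm{co}}f$. First I would record two identities asserting that the two transforms depend only on the closures of $F$. Fix $x$ and regard $\varphi_{F}(x,\cdot)$ as the conjugate of the map $y\mapsto-F(y,x)$, which is convex because $F$ is concave in its first argument; replacing it by its convex closure leaves the conjugate unchanged, and that convex closure equals $-\mathrm{cl}_{1}F(\cdot,x)$, since closing $F(\cdot,x)$ as a concave function and then negating is the same as closing the convex function $-F(\cdot,x)$. Hence $\varphi_{F}=\varphi_{\mathrm{cl}_{1}F}$. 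Symmetrically, $\varphi^{F}(x,\cdot)$ is the conjugate of the convex map $F(x,\cdot)$, whose convex closure is $\mathrm{cl}_{2}F(x,\cdot)$ by definition, so $\varphi^{F}=\varphi^{\mathrm{cl}_{2}F}$. Thus $\varphi_{F}$ depends only on $\mathrm{cl}_{1}F$ and $\varphi^{F}$ only on $\mathrm{cl}_{2}F$.

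The forward implication of the equivalence is then immediate: if $F\sim H$, then $\mathrm{cl}_{1}F=\mathrm{cl}_{1}H$ and $\mathrm{cl}_{2}F=\mathrm{cl}_{2}H$, so $\varphi_{F}=\varphi_{H}$ and $\varphi^{F}=\varphi^{H}$. For the converse I would invert these identities by conjugating once more. Conjugating $\varphi_{F}(x,\cdot)$ back in $X$ returns the biconjugate $(-F(\cdot,x))^{\ast\ast}=\overline{\mathrm{co}}(-F(\cdot,x))=-\mathrm{cl}_{1}F(\cdot,x)$, so $\varphi_{F}$ recovers $\mathrm{cl}_{1}F$; likewise conjugating $\varphi^{F}(x,\cdot)$ returns $\mathrm{cl}_{2}F(x,\cdot)$, so $\varphi^{F}$ recovers $\mathrm{cl}_{2}F$. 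Consequently $\varphi_{F}=\varphi_{H}$ and $\varphi^{F}=\varphi^{H}$ force $\mathrm{cl}_{1}F=\mathrm{cl}_{1}H$ and $\mathrm{cl}_{2}F=\mathrm{cl}_{2}H$, i.e. $F\sim H$.

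For the last assertion I would rewrite the two operators directly in terms of the transforms. Unwinding the definition, $x^{\ast}\in A^{F}(x)$ means $\langle x^{\ast},y\rangle-F(x,y)\leq\langle x^{\ast},x\rangle$ for all $y$, that is $\varphi^{F}(x,x^{\ast})\leq\langle x^{\ast},x\rangle$; similarly $x^{\ast}\in\,^{F}\!A(x)$ iff $\varphi_{F}(x,x^{\ast})\leq\langle x^{\ast},x\rangle$. Hence $A^{F}$ is determined by $\varphi^{F}$ alone and $\,^{F}\!A$ by $\varphi_{F}$ alone. Since $F\sim H$ yields $\varphi^{F}=\varphi^{H}$ and $\varphi_{F}=\varphi_{H}$ by the first part, we obtain $A^{F}=A^{H}$ and $\,^{F}\!A=\,^{H}\!A$.

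The step deserving the most care, rather than the routine manipulations above, is the bookkeeping for improper saddle functions. The slices $\mathrm{cl}_{1}F(\cdot,x)$ and $\mathrm{cl}_{2}F(x,\cdot)$ may take the values $\pm\infty$, so I must check that the biconjugation identity $f^{\ast\ast}=\overline{\mathrm{co}}f$ still returns the intended closed function, in particular in the degenerate case warned about in the preliminaries where a convex slice equals $-\infty$ at one point and is therefore identically $-\infty$, and that the passage between closing $F(\cdot,x)$ as a concave function and closing $-F(\cdot,x)$ as a convex one remains valid at such points. Verifying that the sign conventions and the $\pm\infty$ conventions stay compatible throughout is where the genuine work lies.
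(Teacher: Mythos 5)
Your proposal is correct and follows essentially the same route as the paper: establish $\varphi_{F}=\varphi_{\mathrm{cl}_{1}F}$ and $\varphi^{F}=\varphi^{\mathrm{cl}_{2}F}$ via the invariance of the conjugate under closure, and recover the closures by conjugating once more for the converse. Your treatment of the operators (characterizing $A^{F}$ and $\,^{F}\!A$ by the sublevel conditions $\varphi^{F}(x,x^{\ast})\leq\langle x^{\ast},x\rangle$ and $\varphi_{F}(x,x^{\ast})\leq\langle x^{\ast},x\rangle$) is a slight repackaging of the paper's identities $A^{F}=A^{\mathrm{cl}_{2}F}$ and $\,^{F}\!A=\,^{\mathrm{cl}_{1}F}\!A$, and the improper cases you flag are already covered by the conventions in the preliminaries.
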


\begin{proof}
If $F$ is a saddle function, then
\begin{equation}
A^{F}=A^{\mathrm{cl}_{2}F},\quad \varphi_{F}=\varphi_{\mathrm{cl}_{1}F}.
\label{cl_2F}%
\end{equation}
Here, the first equality stems from the definition of $A^{F}$ and the closure
of a convex function, while the second one is a consequence of relation
(\ref{Fitzpatrick-transform}) and the fact that $f^{\ast}=\left(  \overline
{f}\right)  ^{\ast}$ for every convex function $f$.

In a similar way as in \eqref{cl_2F}, if $F$ is a saddle function, then
\begin{equation}
\,^{F}\!A=\,^{\mathrm{cl}_{1}F}\!A,\quad \varphi^{F}=\varphi^{\mathrm{cl}_{2}%
F}. \label{cl_bis}%
\end{equation}
It follows from the above relations that whenever $F$ and $H$ are equivalent
saddle functions, then $A^{H}=A^{F}$, $^{H}\!A=^{F}\! \!A$, $\varphi
_{H}=\varphi_{F}$ and $\varphi^{H}=\varphi^{F}$.

Now assume that $F$ and $H$ are two saddle functions such that $\varphi
_{H}=\varphi_{F}$ and $\varphi^{H}=\varphi^{F}$. From the first equality we
deduce that
\[
(-F(\cdot,x))^{\ast}(x^{\ast})=(-H(\cdot,x))^{\ast}(x^{\ast}),
\]
and, taking again the Fenchel conjugate, we get that $\mathrm{cl}%
_{1}F=\mathrm{cl}_{1}H.$ Moreover, from $\varphi^{H}=\varphi^{F},$ we get
that
\[
F(x,\cdot)^{\ast}(x^{\ast})=H(x,\cdot)^{\ast}(x^{\ast}),
\]
and, by taking the conjugates, we obtain that $\mathrm{cl}_{2}F=\mathrm{cl}%
_{2}H.$ Thus, $F$ and $H$ are equivalent.
\end{proof}

\section{The class of representative functions and saddle functions}

Given a maximal monotone operator $T$, there is a whole family of
representative functions $\mathcal{H}(T)$, one of which is its Fitzpatrick function.

In this section we will address the following question: given a maximal
monotone operator $T$ and one of its representative functions $\varphi
\in \mathcal{H}(T)$, is it true that $\varphi$ arises as the Fitzpatrick
transform of a bifunction related to $T$? The answer is positive, and in
addition the bifunction can be chosen to be a closed saddle function, as we
will see in the sequel.

In the following proposition, we will show that, under suitable assumptions,
both the Fitzpatrick transform and the upper Fitzpatrick transform of a
bifunction $F$ belong to $\mathcal{H}(T).$
%
%On the other hand, there are several bifunctions $F$ such that
%$A^{F}=\,^{F}\!A=T$ . For instance, if $A^{F}=T$ and $F$ is
%monotone, then
%$\mathrm{gph}(A^{F})\subseteq \mathrm{gph(}^{F}\!A)$ so finally $A^{F}%
%=\,^{F}\!A=T$ by maximal monotonicity of $T$. We now show that the Fitzpatrick
%transform of each such bifunction is a representative function of $T$,
%provided that $F(x,\cdot)$ is convex and lsc.
We prove first a lemma:

\begin{lemma}
\label{lemma_3} \label{Lemma}Assume that $T$ is a maximal monotone operator
and $F:X\times X\rightarrow \overline{\mathbb{R}}$ is a bifunction such that
$T(x)\subseteq A^{F}(x)\cap \,^{F}\!A(x)$ for all $x\in X$.

$(i)$ If $F(x,\cdot)$ is lsc and convex for all $x\in X$, then $\varphi_{F}%
\in \mathcal{H}(T)$.

$(ii)$ If $F(\cdot,y)$ is usc and concave for all $y\in X$, then $\varphi
^{F}\in \mathcal{H}(T)$.
\end{lemma}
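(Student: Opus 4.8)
The plan is to verify, for $\varphi_{F}$, the three defining properties of membership in $\mathcal{H}(T)$: convexity, closedness, and the sandwich $\varphi_{F}(x,x^{\ast})\geq \langle x^{\ast},x\rangle$ for all $(x,x^{\ast})$ together with equality on $\operatorname*{gph}T$. The two inclusions in the hypothesis split cleanly: $T(x)\subseteq A^{F}(x)$ will deliver the global lower bound, whereas $T(x)\subseteq \,^{F}\!A(x)$ will deliver the reverse inequality on the graph.

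For $(i)$, convexity and lower semicontinuity of $\varphi_{F}$ follow immediately from the remark preceding the statement, since by assumption $F(x,\cdot)$ is lsc and convex for every $x$. To obtain the lower bound I would compare $\varphi_{F}$ with the Fitzpatrick function. Fix $(x,x^{\ast})$ and let $(y,y^{\ast})\in \operatorname*{gph}T$. Since $y^{\ast}\in T(y)\subseteq A^{F}(y)$, the definition of $A^{F}$ with the test point $x$ gives $F(y,x)\geq \langle y^{\ast},x-y\rangle$, whence $\langle x^{\ast},y\rangle+F(y,x)\geq \langle x^{\ast},y\rangle+\langle y^{\ast},x-y\rangle$. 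Taking the supremum over $y\in X$ on the left and over $(y,y^{\ast})\in \operatorname*{gph}T$ on the right yields $\varphi_{F}\geq \mathcal{F}_{T}$, and since $\mathcal{F}_{T}\in \mathcal{H}(T)$ we conclude $\varphi_{F}(x,x^{\ast})\geq \mathcal{F}_{T}(x,x^{\ast})\geq \langle x^{\ast},x\rangle$ everywhere; in particular $\varphi_{F}$ never takes the value $-\infty$, so it is proper and hence closed. For the equality on the graph, take $(x,x^{\ast})\in \operatorname*{gph}T$; then $x^{\ast}\in \,^{F}\!A(x)$, so $-F(y,x)\geq \langle x^{\ast},y-x\rangle$, i.e. $\langle x^{\ast},y\rangle+F(y,x)\leq \langle x^{\ast},x\rangle$ for every $y$. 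Passing to the supremum over $y$ gives $\varphi_{F}(x,x^{\ast})\leq \langle x^{\ast},x\rangle$, which combined with the lower bound forces $\varphi_{F}(x,x^{\ast})=\langle x^{\ast},x\rangle$. This proves $\varphi_{F}\in \mathcal{H}(T)$.

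For $(ii)$ I would reduce to $(i)$ by passing to $\hat{F}(x,y)=-F(y,x)$. A direct computation from (\ref{Fitzpatrick-transform}) gives $\varphi^{F}=\varphi_{\hat{F}}$, and unwinding the definitions shows $A^{\hat{F}}=\,^{F}\!A$ and $\,^{\hat{F}}\!A=A^{F}$, so that the hypothesis $T(x)\subseteq A^{F}(x)\cap \,^{F}\!A(x)$ is literally the hypothesis $T(x)\subseteq A^{\hat{F}}(x)\cap \,^{\hat{F}}\!A(x)$. Moreover, since $F(\cdot,y)$ is usc and concave for every $y$, the section $\hat{F}(x,\cdot)=-F(\cdot,x)$ is lsc and convex for every $x$. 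Thus $\hat{F}$ meets all the assumptions of part $(i)$, and applying $(i)$ to $\hat{F}$ gives $\varphi^{F}=\varphi_{\hat{F}}\in \mathcal{H}(T)$.

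The technical content is genuinely routine; the only points needing care are keeping the two suprema (over $X$ versus over $\operatorname*{gph}T$) properly aligned when comparing $\varphi_{F}$ with $\mathcal{F}_{T}$, and transcribing all four operators correctly under the involution $F\mapsto \hat{F}$ so that the reduction in $(ii)$ is exact. I expect the main conceptual obstacle to be simply recognizing that the two inclusions in the hypothesis are precisely what each of the two separate inequalities needs — one valid globally, the other only on the graph — rather than any delicate estimate.
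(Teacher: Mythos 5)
Your proof is correct and follows essentially the same route as the paper: the inclusion $T\subseteq A^{F}$ gives $\varphi_{F}\geq\mathcal{F}_{T}$, the inclusion $T\subseteq\,^{F}\!A$ gives $\varphi_{F}\leq\pi$ on $\operatorname{gph}T$, and part $(ii)$ is reduced to part $(i)$ via $\hat{F}(x,y)=-F(y,x)$ exactly as in the text. The only (harmless) difference is that you cite the known fact $\mathcal{F}_{T}\geq\pi$ for maximal monotone $T$ to conclude the global lower bound, whereas the paper re-derives it in place by showing that $\varphi_{F}(x,x^{\ast})\leq\langle x^{\ast},x\rangle$ forces $(x,x^{\ast})\in\operatorname{gph}T$ by maximality.
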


\begin{proof}
$(i)$ Since $F(x,\cdot)$ is lsc and convex for all $x\in X$, $\varphi_{F}$ is
lsc and convex. Assume first that, for some $(x,x^{\ast})\in X\times X^{\ast
},$
\[
\varphi_{F}(x,x^{\ast})\leq \left \langle x^{\ast},x\right \rangle .
\]
For every $(y,y^{\ast})\in \mathrm{gph}(T)$, using successively that
$T(x)\subseteq A^{F}(x)$ and the definition of $\varphi_{F}$,%
\begin{equation}
\left \langle y^{\ast},x-y\right \rangle +\left \langle x^{\ast},y\right \rangle
\leq \left \langle x^{\ast},y\right \rangle +F(y,x)\leq \varphi_{F}(x,x^{\ast
})\leq \left \langle x^{\ast},x\right \rangle . \label{phiF}%
\end{equation}

Hence, $\left \langle y^{\ast}-x^{\ast},y-x\right \rangle \geq0$ for all
$(y,y^{\ast})\in \mathrm{gph}(T)$, so, by the maximality of $T$, $x^{\ast}\in
T(x)$. Putting $y=x$ and $y^{\ast}=x^{\ast}$ in (\ref{phiF}) we deduce that
$\varphi_{F}(x,x^{\ast})=\left \langle x^{\ast},x\right \rangle $. It follows
that $\varphi_{F}(x,x^{\ast})<\left \langle x^{\ast},x\right \rangle $ is not
possible, hence $\varphi_{F}(x,x^{\ast})\geq \left \langle x^{\ast
},x\right \rangle $ for all $(x,x^{\ast})\in X\times X^{\ast}$.

Now, if
\[
\varphi_{F}(x,x^{\ast}) > \left \langle x^{\ast},x\right \rangle ,
\]
by contradiction it is easy to show that $(x,x^{\ast})\notin \mathrm{gph}(T)$.
Indeed, if $(x,x^{\ast})\in \mathrm{gph}(T),$ from $T(x)\subseteq \,^{F}\!A(x)$
we deduce that, for all $y\in X$,%
\[
F(y,x)+\left \langle x^{\ast},y\right \rangle \leq \left \langle x^{\ast
},x\right \rangle .
\]

By taking the supremum for all $y\in X$ we get that $\varphi_{F}(x,x^{\ast
})\leq \left \langle x^{\ast},x\right \rangle $, a contradiction. Thus, by the
first part of the proof, $\varphi_{F}(x,x^{\ast}) \ge \left \langle x^{\ast
},x\right \rangle $ for all $(x,x^{\ast}) \in X\times X^{\ast}$, and
$\varphi_{F}(x,x^{\ast}) = \left \langle x^{\ast},x\right \rangle $ if and only
if $(x,x^{\ast})\in \mathrm{gph}(T)$, i.e. $\varphi_{F}\in \mathcal{H}(T)$.

$(ii)$ We apply part $(i)$ to the bifunction $\hat{F}(x,y):=-F(y,x)$. We note
that $\hat{F}(x,\cdot)$ is lsc and convex for all $x\in X$, while
$\varphi_{\hat{F}}=\varphi^{F}$, $A^{\hat{F}}(x)=\,^{F}\!A(x)$ and $^{\hat{F}%
}\!A(x)=A^{F}(x)$. We deduce that $\varphi^{F}=\varphi_{\hat{F}}\in
\mathcal{H}(T)$.
\end{proof}

Note that in the above lemma we do not assume that $F$ is monotone. In the
special case of a monotone bifunction $F$, one has $A^{F}(x)\subseteq
\,^{F}\!A(x)$, so the assumption $T(x)\subseteq A^{F}(x)\cap \,^{F}\!A(x)$ is
equivalent to $T(x)=A^{F}(x)$ in view of the maximality of $T$.

\begin{proposition}
\label{Prop_is_repr}Assume that $T$ is a maximal monotone operator and
$F:X\times X\rightarrow \overline{\mathbb{R}}$ is a closed saddle function. If
$T(x)\subseteq A^{F}(x)\cap \,^{F}\!A(x)$ for all $x\in X$, then $\varphi
_{F}\in \mathcal{H}(T)$ and $\varphi^{F}\in \mathcal{H}(T)$.
\end{proposition}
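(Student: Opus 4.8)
The plan is to deduce this directly from Lemma~\ref{lemma_3}, whose two parts already yield the two desired conclusions, but only under the section-regularity hypotheses that $F(x,\cdot)$ be lsc convex (for $\varphi_F$) and that $F(\cdot,y)$ be usc concave (for $\varphi^F$). A closed saddle function need not satisfy these pointwise: closedness only asserts $\mathrm{cl}_1F\sim\mathrm{cl}_2F\sim F$, i.e.\ equivalence of the closures, not that the individual sections coincide with their closures. So the idea is to replace $F$ by an equivalent saddle function that is manifestly regular in the relevant variable, and then transport everything back using Proposition~\ref{Prop-ex-rem}, which guarantees that $\varphi_F$, $\varphi^F$, $A^F$ and $\,^F\!A$ are invariants of the equivalence class.

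For the first conclusion I would work with $G:=\mathrm{cl}_2F$. By construction $G(x,\cdot)$ is a closed convex function for every $x$, so the section hypothesis of Lemma~\ref{lemma_3}$(i)$ holds for $G$. Since $F$ is closed we have $G\sim F$, with both saddle functions, so Proposition~\ref{Prop-ex-rem} yields $A^G=A^F$ and $\,^G\!A=\,^F\!A$; hence the standing assumption $T(x)\subseteq A^F(x)\cap\,^F\!A(x)$ transfers verbatim to $T(x)\subseteq A^G(x)\cap\,^G\!A(x)$. Lemma~\ref{lemma_3}$(i)$ then gives $\varphi_G\in\mathcal{H}(T)$, and since $\varphi_G=\varphi_F$ (again by Proposition~\ref{Prop-ex-rem}), we conclude $\varphi_F\in\mathcal{H}(T)$.

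The second conclusion is symmetric: I would set $H:=\mathrm{cl}_1F$, so that $H(\cdot,y)$ is a closed concave function for every $y$, matching the hypothesis of Lemma~\ref{lemma_3}$(ii)$. Closedness of $F$ gives $H\sim F$, so Proposition~\ref{Prop-ex-rem} again transfers the containment to $T(x)\subseteq A^H(x)\cap\,^H\!A(x)$ and identifies $\varphi^H=\varphi^F$; Lemma~\ref{lemma_3}$(ii)$ then yields $\varphi^F=\varphi^H\in\mathcal{H}(T)$. The one point that genuinely needs the closedness hypothesis---and is the main thing to verify carefully---is that $\mathrm{cl}_2F$ and $\mathrm{cl}_1F$ really are saddle functions equivalent to $F$, so that Proposition~\ref{Prop-ex-rem} applies and carries both the operator containment and the (upper) Fitzpatrick transform across the equivalence class. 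Everything else reduces to a direct citation of the lemma.
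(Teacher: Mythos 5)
Your proposal is correct and follows essentially the same route as the paper: the paper's proof also replaces $F$ by $\mathrm{cl}_2F$ (resp.\ $\mathrm{cl}_1F$), invokes Proposition~\ref{Prop-ex-rem} to transfer the containment $T(x)\subseteq A^{F}(x)\cap\,^{F}\!A(x)$ and the identities $\varphi_{\mathrm{cl}_2F}=\varphi_F$, $\varphi^{\mathrm{cl}_1F}=\varphi^F$, and then applies the corresponding part of Lemma~\ref{lemma_3}. The one point you flag for careful verification, namely that $\mathrm{cl}_2F\sim F\sim\mathrm{cl}_1F$, is immediate here since it is precisely the definition of a closed saddle function.
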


\begin{proof}
Since $F$ is closed, $F\sim \mathrm{cl}_{2}F$. By Proposition \ref{Prop-ex-rem}%
, $A^{\mathrm{cl}_{2}F}=A^{F}$, $^{\mathrm{cl}_{2}F}\!A=\,^{F}\!A$ and
$\varphi_{\mathrm{cl}_{2}F}=\varphi_{F}$. By applying part $(i)$ of the Lemma
\ref{lemma_3} to $\mathrm{cl}_{2}F$, we conclude that $\varphi_{F}%
=\varphi_{\mathrm{cl}_{2}F}\in \mathcal{H}(T)$. Likewise, using $F\sim
\mathrm{cl}_{1}F$ and part $(ii)$ of the lemma, we obtain $\varphi^{F}%
\in \mathcal{H}(T)$.
\end{proof}

In the main result of this section, we prove that all representative functions
of $T$ can be realized by taking the Fitzpatrick transform of suitable saddle functions.

In what follows, $\varphi^{\ast}$ will be the convex conjugate of $\varphi$
with respect to the pair of variables $(x,x^{\ast})$, while expressions like
$\left(  \varphi^{\ast}(\cdot,x)\right)  ^{\ast}(y)$ will mean the convex
conjugate of $\varphi^{\ast}$ (in $X$) with respect to the variable $x^{\ast}$ only.

Given $\varphi \in \mathcal{H}(T)$, define the bifunction $F$ by the formula%
\begin{equation}
F(x,y)=\sup_{x^{\ast}\in X^{\ast}}\{ \left \langle x^{\ast},y\right \rangle
-\varphi^{\ast}(x^{\ast},x)\}=\left(  \varphi^{\ast}(\cdot,x)\right)  ^{\ast
}(y) \label{F}%
\end{equation}

By taking the second conjugate in (\ref{F}) with respect to $y$ we also find%
\begin{equation}
\varphi^{\ast}(x^{\ast},x)=\left(  F(x,\cdot)\right)  ^{\ast}(x^{\ast}%
)=\sup_{y\in X}\{ \left \langle x^{\ast},y\right \rangle -F(x,y)\}.
\label{phiF3}%
\end{equation}

\begin{theorem}
\label{Theor-basic}Let $T$ be a maximal monotone operator and $\varphi
\in \mathcal{H}(T).$ Then the bifunction $F$ defined by the formula (\ref{F})
has the following properties:

\begin{enumerate}
\item[(a)] $F$ is a saddle function such that $\mathrm{cl}_{2}F=F$.

\item[(b)] $F$ is normal, with $\operatorname*{co}D(T)\subseteq D(F)\subseteq
\overline{\operatorname*{co}}D(T)$;

\item[(c)] $A^{F}=\,^{F}\!A=T$;

\item[(d)] $\varphi_{F}=\varphi$ and $\varphi^{F}=(\varphi^{\ast})^{t}$.
\end{enumerate}
\end{theorem}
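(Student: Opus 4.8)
The plan is to distil two conjugacy relations from the definition (\ref{F}) and to read off all four assertions from them, using the results already established in this section. Throughout, write $\psi:=(\varphi^{\ast})^{t}$, so that $\psi(x,x^{\ast})=\varphi^{\ast}(x^{\ast},x)$; recall that $\psi$ is again a representative function of $T$, hence proper, closed and convex. The two relations I would rely on are that, for each fixed $x$, the functions $F(x,\cdot)$ and $\psi(x,\cdot)$ are mutually conjugate in the $X/X^{\ast}$ duality: indeed (\ref{F}) reads $F(x,\cdot)=(\psi(x,\cdot))^{\ast}$, while (\ref{phiF3}) is precisely the reverse identity $\psi(x,\cdot)=(F(x,\cdot))^{\ast}$.

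For (a) I would treat the two variables separately. Since $F(x,\cdot)=(\psi(x,\cdot))^{\ast}$ is a conjugate, it is closed and convex for every $x$; this yields both convexity of $F$ in its second argument and the equality $\mathrm{cl}_{2}F=F$ (closing an already closed function changes nothing). For concavity in the first argument, observe that $\varphi^{\ast}$ is jointly convex on $X^{\ast}\times X$, hence $(x^{\ast},x)\mapsto\langle x^{\ast},y\rangle-\varphi^{\ast}(x^{\ast},x)$ is jointly concave; taking the partial supremum over $x^{\ast}$ preserves concavity in the remaining variable, so $F(\cdot,y)$ is concave.

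The core is (d), after which (c) is immediate. The identity $\varphi^{F}=(\varphi^{\ast})^{t}$ is a restatement of (\ref{phiF3}): by (\ref{Fitzp-upper}), $\varphi^{F}(x,x^{\ast})=(F(x,\cdot))^{\ast}(x^{\ast})=\varphi^{\ast}(x^{\ast},x)$. For $\varphi_{F}=\varphi$ I would expand $\varphi_{F}(x,x^{\ast})=\sup_{y}(\langle x^{\ast},y\rangle+F(y,x))$, insert the definition of $F(y,x)$, and merge the two suprema into a single one over $(y,z^{\ast})\in X\times X^{\ast}$; the result coincides with $\psi^{\ast}(x^{\ast},x)=\varphi^{\ast\ast}(x,x^{\ast})$, which equals $\varphi$ since $\varphi$ is proper, closed and convex. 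Granting (d), part (c) follows from the defining property of $\mathcal{H}(T)$: unwinding the definition of $A^{F}$ shows $x^{\ast}\in A^{F}(x)$ iff $\varphi^{F}(x,x^{\ast})\le\langle x^{\ast},x\rangle$, and since $\varphi^{F}=(\varphi^{\ast})^{t}\in\mathcal{H}(T)$ this happens exactly when $(x,x^{\ast})\in\mathrm{gph}\,T$; symmetrically $x^{\ast}\in{}^{F}\!A(x)$ iff $\varphi_{F}(x,x^{\ast})\le\langle x^{\ast},x\rangle$, i.e. iff $(x,x^{\ast})\in\mathrm{gph}\,T$. Thus $A^{F}={}^{F}\!A=T$.

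Finally, for (b) I would identify $D(F)$ through the conjugacy. From $F(x,\cdot)=(\psi(x,\cdot))^{\ast}$ one checks that $F(x,y)=-\infty$ for some (equivalently every) $y$ exactly when $\psi(x,\cdot)\equiv+\infty$, that is when $x\notin P_{1}\operatorname*{dom}\psi$; hence $F$ is normal with $D(F)=P_{1}\operatorname*{dom}(\varphi^{\ast})^{t}$, a nonempty set because $(\varphi^{\ast})^{t}$ is proper. Applying Proposition \ref{domain(T)} to the representative function $(\varphi^{\ast})^{t}$ then sandwiches $\operatorname*{co}D(T)\subseteq D(F)\subseteq\overline{\operatorname*{co}}D(T)$. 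The only delicate point in the whole argument is the conjugate bookkeeping behind $\varphi_{F}=\varphi$: one must track the transposition in $\psi=(\varphi^{\ast})^{t}$ together with the canonical pairing on $X\times X^{\ast}$ to be sure the merged double supremum really is $\varphi^{\ast\ast}$ evaluated at $(x,x^{\ast})$, and not at a permuted argument. Everything else reduces to standard conjugacy facts and the already-established properties of representative functions.
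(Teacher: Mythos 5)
Your proposal is correct, and for parts (a), (b) and (d) it follows essentially the same route as the paper: concavity of $F(\cdot,y)$ via the partial supremum of a jointly concave function, normality together with the identification $D(F)=P_{1}\operatorname*{dom}(\varphi^{\ast})^{t}$ followed by Proposition \ref{domain(T)}, and the merged double supremum giving $\varphi_{F}=\varphi^{\ast \ast}=\varphi$, with $\varphi^{F}=(\varphi^{\ast})^{t}$ read off from (\ref{phiF3}). The one place you genuinely diverge is part (c). The paper proves it by hand: $T\subseteq A^{F}$ by a direct estimate from (\ref{F}), $A^{F}\subseteq T$ via (\ref{phiF3}) and the representative property of $(\varphi^{\ast})^{t}$, then $^{F}\!A\subseteq T$ via the maximality of $T$ and $T\subseteq{}^{F}\!A$ via the minimality of the Fitzpatrick function. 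You instead prove (d) first and deduce (c) from the elementary equivalences $x^{\ast}\in A^{F}(x)\Leftrightarrow \varphi^{F}(x,x^{\ast})\leq \left \langle x^{\ast},x\right \rangle$ and $x^{\ast}\in{}^{F}\!A(x)\Leftrightarrow \varphi_{F}(x,x^{\ast})\leq \left \langle x^{\ast},x\right \rangle$, combined with the fact that a representative function of a maximal monotone operator equals the duality product exactly on $\operatorname*{gph}T$. Since your proof of (d) does not use (c), there is no circularity; this reorganization makes (c) shorter and more uniform, at the cost only of stating explicitly the two equivalences that the paper uses implicitly in one direction. The conjugacy bookkeeping you flag as delicate works out: the inner supremum over $z^{\ast}$ of $\left \langle x^{\ast},y\right \rangle +\left \langle z^{\ast},x\right \rangle -\varphi^{\ast}(z^{\ast},y)$ is exactly the canonical pairing of $(x^{\ast},x)$ with $(y,z^{\ast})$, so the merged supremum is indeed $\varphi^{\ast \ast}(x,x^{\ast})$ and not a permuted evaluation.
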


\begin{proof}
$(a)$ For every $x\in X$, $F(x,\cdot)$ is the Fenchel transform of a function,
therefore it is closed and convex. In addition, for every $y\in X$, $F(x,y)$
is the supremum over $x^{\ast}$ of a family of functions which are concave
with respect to the pair $(x,x^{\ast})$; hence $F(\cdot,y)$ is concave.

$(b)$ Since $F(x,\cdot)$ is convex and closed, if $F(x,y_{0})=-\infty$ for
some $(x,y_{0})$, then $F(x,\cdot)=-\infty;$ in particular, $F$ is normal. In
addition, it is evident that $x\in D(F)$ if and only if $\varphi^{\ast
}(x^{\ast},x)<+\infty$ for some $x^{\ast}\in X^{\ast}$, i.e., $D(F)=P_{1}%
\mathrm{dom}(\varphi^{\ast})^{t}$. Since $(\varphi^{\ast})^{t}$ is a
representative function of $T$, the inclusions then follow from Proposition
\ref{domain(T)}.

$(c)$ Let us assume that $x^{\ast}\in T(x)$. Taking into account that
$\varphi \in \mathcal{H}(T)$ entails that $(\varphi^{\ast})^{t}\in
\mathcal{H}(T)$ too, for all $y\in X$ we find%
\[
F(x,y)=\sup_{z^{\ast}\in X^{\ast}}\{ \left \langle z^{\ast},y\right \rangle
-\varphi^{\ast}(z^{\ast},x)\} \geq \left \langle x^{\ast},y\right \rangle
-\varphi^{\ast}(x^{\ast},x)=\left \langle x^{\ast},y-x\right \rangle ;
\]
hence, $T(x)\subseteq A^{F}(x)$.

Assume now that $x^{\ast}\in A^{F}(x)$. Then, taking into account
(\ref{phiF3}), we find successively
\begin{align*}
\left \langle x^{\ast},y-x\right \rangle \leq F(x,y),\, \forall y\in X  &
\Leftrightarrow \sup_{y\in X}\{ \left \langle x^{\ast},y\right \rangle -F(x,y)\}
\leq \left \langle x^{\ast},x\right \rangle \\
&  \Leftrightarrow \varphi^{\ast}(x^{\ast},x)\leq \left \langle x^{\ast
},x\right \rangle .
\end{align*}
Using again that $(\varphi^{\ast})^{t}$ is a representative function, we find
that $x^{\ast}\in T(x)$ so $A^{F}=T$.

Assume that $x^{\ast}\in \,^{F}\!A(x)$. This is equivalent to%
\[
\forall y\in X,\quad \left \langle x^{\ast},y-x\right \rangle +F(y,x)\leq0
\]
i.e.,%
\begin{equation}
\forall y\in X,\forall y^{\ast}\in X^{\ast},\quad \left \langle x^{\ast
},y-x\right \rangle +\left \langle y^{\ast},x\right \rangle -\varphi^{\ast
}(y^{\ast},y)\leq0. \label{FA}%
\end{equation}

Since $(\varphi^{\ast})^{t}$ is also a representative function, if we take
$(y,y^{\ast})\in \operatorname*{gph}T$ then $\varphi^{\ast}(y^{\ast
},y)=\left \langle y^{\ast},y\right \rangle $ so we deduce from (\ref{FA}) that%
\[
\forall(y,y^{\ast})\in \operatorname*{gph}T,\quad \left \langle y^{\ast}-x^{\ast
},y-x\right \rangle \geq0.
\]

From the maximality of $T$ we deduce that $x^{\ast}\in T(x)$. Conversely, if
$x^{\ast}\in T(x)$, then for every $(y,y^{\ast})\in X\times X^{\ast}$ we find,
using that $\mathcal{F}_{T}$ is the smallest representative function:%
\[
\varphi^{\ast}(y,y^{\ast})\geq \mathcal{F}_{T}(y,y^{\ast})\geq \left \langle
x^{\ast},y\right \rangle +\left \langle y^{\ast},x\right \rangle -\left \langle
x^{\ast},x\right \rangle
\]
so (\ref{FA}) holds. Hence $x^{\ast}\in \,^{F}\!A(x)$.

$(d)$ Since $\varphi$ is proper, lsc and convex, $\varphi^{\ast \ast}=\varphi.$
We have from (\ref{phiF3}), using also that $F(x,\cdot)$ is convex and
closed,
\begin{align*}
\varphi(x,x^{\ast})  &  =\sup_{(y^{\ast},y)\in X^{\ast}\times X}\left(
\langle y^{\ast},x\rangle+\langle x^{\ast},y\rangle-\varphi^{\ast}(y^{\ast
},y)\right) \\
&  =\sup_{(y^{\ast},y)\in X^{\ast}\times X}\left(  \langle y^{\ast}%
,x\rangle+\langle x^{\ast},y\rangle-(F(y,\cdot))^{\ast}(y^{\ast})\right) \\
&  =\sup_{y\in Y}(\langle x^{\ast},y\rangle+\sup_{y^{\ast}\in X^{\ast}}\left(
\langle y^{\ast},x\rangle-(F(y,\cdot))^{\ast}(y^{\ast})\right) \\
&  =\sup_{y\in Y}\left(  \langle x^{\ast},y\rangle+(F(y,\cdot))^{\ast \ast
}(x)\right) \\
&  =\sup_{y\in Y}\left(  \langle x^{\ast},y\rangle+F(y,x)\right) \\
&  =\varphi_{F}(x,x^{\ast}).
\end{align*}

Finally, comparing (\ref{phiF3}) and (\ref{Fitzp-upper}) we get immediately
$\varphi^{F}=(\varphi^{\ast})^{t}$.
\end{proof}

Note that $F$ is not monotone in general:

\begin{proposition}
\label{Fmon}The bifunction $F$ defined by (\ref{F}) is monotone if and only if
$\varphi(x,x^{\ast})\leq \varphi^{\ast}(x^{\ast},x),$ for every $x\in X,$
$x^{\ast}\in X^{\ast}$.
\end{proposition}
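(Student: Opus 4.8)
The plan is to reduce the equivalence to a single symmetric inequality in $\varphi^{\ast}$ to which both conditions can be matched. By definition $F$ is monotone if and only if $F(x,y)\le -F(y,x)$ for all $x,y\in X$. Using formula (\ref{F}), the left-hand side is a supremum over $x^{\ast}$ and, since $-\sup=\inf(-\,\cdot\,)$, the right-hand side is an infimum over $y^{\ast}$:
\begin{equation}
F(x,y)=\sup_{x^{\ast}\in X^{\ast}}\bigl(\langle x^{\ast},y\rangle-\varphi^{\ast}(x^{\ast},x)\bigr),\qquad -F(y,x)=\inf_{y^{\ast}\in X^{\ast}}\bigl(\varphi^{\ast}(y^{\ast},y)-\langle y^{\ast},x\rangle\bigr).
\label{supinf}
\end{equation}
Since a supremum is bounded above by an infimum precisely when the bound holds for every pair of indices, $F$ is monotone if and only if
\begin{equation}
\langle x^{\ast},y\rangle+\langle y^{\ast},x\rangle\le\varphi^{\ast}(x^{\ast},x)+\varphi^{\ast}(y^{\ast},y)\qquad\text{for all }x,y\in X,\ x^{\ast},y^{\ast}\in X^{\ast}.
\label{symm}
\end{equation}

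Next I would show that (\ref{symm}) is equivalent to the pointwise estimate $\varphi(x,x^{\ast})\le\varphi^{\ast}(x^{\ast},x)$. For the implication from the estimate to (\ref{symm}), I apply the Fenchel inequality (\ref{Fenchel_inequality}) to $\varphi$ on $X\times X^{\ast}$, namely $\varphi^{\ast}(x^{\ast},x)+\varphi(y,y^{\ast})\ge\langle x^{\ast},y\rangle+\langle y^{\ast},x\rangle$, and then bound $\varphi(y,y^{\ast})$ from above by $\varphi^{\ast}(y^{\ast},y)$ using the hypothesis; this yields (\ref{symm}). For the converse, I use that $\varphi$ is proper, lsc and convex, so $\varphi=\varphi^{\ast\ast}$, which expresses $\varphi$ as
\begin{equation}
\varphi(x,x^{\ast})=\sup_{(y^{\ast},y)\in X^{\ast}\times X}\bigl(\langle y^{\ast},x\rangle+\langle x^{\ast},y\rangle-\varphi^{\ast}(y^{\ast},y)\bigr);
\label{biconj}
\end{equation}
by (\ref{symm}) every term inside this supremum is at most $\varphi^{\ast}(x^{\ast},x)$, hence $\varphi(x,x^{\ast})\le\varphi^{\ast}(x^{\ast},x)$. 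Chaining the two equivalences proves the proposition.

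The crux of the argument, and the step I would isolate first, is recognizing that monotonicity of $F$ and the inequality $\varphi\le(\varphi^{\ast})^{t}$ both collapse onto the single symmetric condition (\ref{symm}); once this is seen, each implication is a one-line application of the Fenchel inequality or of biconjugation, and the whole argument is symmetric in the two conjugates. The technical care needed is modest: one must justify the equivalence ``supremum $\le$ infimum iff the inequality holds termwise'' in $\overline{\mathbb{R}}$, which is precisely why working with (\ref{supinf}) rather than with a literal sum $F(x,y)+F(y,x)$ is preferable, as it avoids indeterminate $+\infty-\infty$ expressions; and one must keep the duality pairings aligned, since $\varphi$ is defined on $X\times X^{\ast}$ while $\varphi^{\ast}$ lives on $X^{\ast}\times X$. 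I would also record at the outset that $\varphi=\varphi^{\ast\ast}$ is available because $\varphi$, being a representative function with $\operatorname{gph}T\neq\emptyset$, is proper, lsc and convex.
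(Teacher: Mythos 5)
Your proof is correct and follows essentially the same route as the paper: both reduce monotonicity of $F$ to the termwise inequality $\langle x^{\ast},y\rangle+\langle y^{\ast},x\rangle\leq\varphi^{\ast}(x^{\ast},x)+\varphi^{\ast}(y^{\ast},y)$ via ``supremum $\leq$ infimum iff termwise,'' and then identify the supremum over $(y,y^{\ast})$ with $\varphi^{\ast\ast}=\varphi$. The only cosmetic difference is that you prove one direction separately with the Fenchel inequality, where the paper obtains both directions at once from the biconjugation identity.
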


\begin{proof}
In order to see when $F$ is monotone, notice that the condition $F(y,x)\leq
-F(x,y)$ is equivalent to%
\[
\left \langle y^{\ast},x\right \rangle -\varphi^{\ast}(y^{\ast},y)\leq
-\left \langle x^{\ast},y\right \rangle +\varphi^{\ast}(x^{\ast},x)\text{,
}\quad \forall x,y\in X\text{, }x^{\ast},y^{\ast}\in X^{\ast},
\]
or, alternatively,
\[
\sup_{y\in X,y^{\ast}\in X^{\ast}}\left(  \left \langle x^{\ast},y\right \rangle
+\left \langle y^{\ast},x\right \rangle -\varphi^{\ast}(y^{\ast},y)\right)
\leq \varphi^{\ast}(x^{\ast},x)\text{, }\quad \forall x\in X,x^{\ast}\in
X^{\ast},
\]
i.e.,
\[
\varphi(x,x^{\ast})\leq \varphi^{\ast}(x^{\ast},x)\text{, }\quad \forall x\in
X,x^{\ast}\in X^{\ast},
\]
since $\varphi^{\ast \ast}=\varphi$.
\end{proof}

The bifunction $F$ defined by (\ref{F}) is not the only saddle function that
satisfies $(c)$ and $(d)$ of Theorem \ref{Theor-basic}. According to
Proposition \ref{Prop-ex-rem}, any saddle function equivalent to $F$ also
satisfies these conditions. An example of a saddle function equivalent to $F$
is given by
\begin{equation}
\widetilde{F}(x,y)=-\sup_{y^{\ast}\in X^{\ast}}\{ \left \langle y^{\ast
},x\right \rangle -\varphi(y,y^{\ast})\}=-(\varphi(y,\cdot))^{\ast}(x)
\label{F-hat_form}%
\end{equation}
Indeed the next proposition holds:

\begin{proposition}
\label{F-closed}The bifunction $\widetilde{F}$ is a saddle function and
satisfies
\begin{equation}
\widetilde{F}=\mathrm{cl}_{1}F,\qquad F=\mathrm{cl}_{2}\widetilde{F}.
\label{F-Fhat}%
\end{equation}
Consequently, $F$ is lower closed, $\widetilde{F}$ is upper closed, and
$F\sim \widetilde{F}$. Finally,%
\[
F(x,y)\leq \widetilde{F}(x,y),\qquad \forall(x,y)\in X\times X.
\]

\end{proposition}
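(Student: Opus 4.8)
The plan is to reduce the whole statement to two single-variable conjugation identities relating $F$ and $\widetilde{F}$ to $\varphi$ and $\varphi^{\ast}$, and then to read off the four assertions essentially formally. First I would verify that $\widetilde{F}$ is a saddle function. For fixed $y$, the formula $\widetilde{F}(\cdot,y)=-(\varphi(y,\cdot))^{\ast}$ exhibits it as the negative of a convex conjugate, hence closed and concave in $x$. For fixed $x$, rewriting $\widetilde{F}(x,y)=\inf_{y^{\ast}\in X^{\ast}}\{\varphi(y,y^{\ast})-\langle y^{\ast},x\rangle\}$ presents $\widetilde{F}(x,\cdot)$ as the partial minimization over $y^{\ast}$ of the jointly convex function $(y,y^{\ast})\mapsto\varphi(y,y^{\ast})-\langle y^{\ast},x\rangle$; partial minimization preserves convexity, so $\widetilde{F}(x,\cdot)$ is convex. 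Thus $\widetilde{F}$ is concave--convex.

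The core of the argument is the computation of two partial conjugates. Conjugating $\widetilde{F}(x,\cdot)$ in its second variable and unfolding the definition gives $(\widetilde{F}(x,\cdot))^{\ast}(v^{\ast})=\sup_{y,y^{\ast}}\{\langle v^{\ast},y\rangle+\langle y^{\ast},x\rangle-\varphi(y,y^{\ast})\}=\varphi^{\ast}(v^{\ast},x)$, which is precisely the definition of $\varphi^{\ast}$. Dually, conjugating the convex function $-F(\cdot,y)$ in its first variable and unfolding $(\ref{F})$ gives $(-F(\cdot,y))^{\ast}(u^{\ast})=\sup_{x,x^{\ast}}\{\langle x^{\ast},y\rangle+\langle u^{\ast},x\rangle-\varphi^{\ast}(x^{\ast},x)\}=\varphi^{\ast\ast}(y,u^{\ast})=\varphi(y,u^{\ast})$, where the last step uses $\varphi^{\ast\ast}=\varphi$. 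Taking biconjugates then yields the two relations in $(\ref{F-Fhat})$: from the first identity, $\mathrm{cl}_{2}\widetilde{F}(x,\cdot)=(\widetilde{F}(x,\cdot))^{\ast\ast}=\sup_{v^{\ast}}\{\langle v^{\ast},\cdot\rangle-\varphi^{\ast}(v^{\ast},x)\}=F(x,\cdot)$, so $F=\mathrm{cl}_{2}\widetilde{F}$; from the second, $\overline{-F(\cdot,y)}=(-F(\cdot,y))^{\ast\ast}=(\varphi(y,\cdot))^{\ast}=-\widetilde{F}(\cdot,y)$, and since the concave closure is the negative of the convex closure of the negative, this gives $\mathrm{cl}_{1}F=\widetilde{F}$.

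With $(\ref{F-Fhat})$ established, the remaining claims are formal. Composing the two closure relations gives $\mathrm{cl}_{2}\mathrm{cl}_{1}F=\mathrm{cl}_{2}\widetilde{F}=F$, so $F$ is lower closed, and $\mathrm{cl}_{1}\mathrm{cl}_{2}\widetilde{F}=\mathrm{cl}_{1}F=\widetilde{F}$, so $\widetilde{F}$ is upper closed. For the equivalence I would observe that $\mathrm{cl}_{2}F=F$ by Theorem \ref{Theor-basic}(a) while $\mathrm{cl}_{2}\widetilde{F}=F$, and that $\mathrm{cl}_{1}\widetilde{F}=\widetilde{F}$ (because $\widetilde{F}(\cdot,y)$ is already closed concave) while $\mathrm{cl}_{1}F=\widetilde{F}$; hence $\mathrm{cl}_{i}F=\mathrm{cl}_{i}\widetilde{F}$ for $i=1,2$, that is $F\sim\widetilde{F}$. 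Finally the inequality is immediate, since closing $\widetilde{F}(x,\cdot)$ as a convex function produces a minorant: $F=\mathrm{cl}_{2}\widetilde{F}\leq\widetilde{F}$ (equivalently $\widetilde{F}=\mathrm{cl}_{1}F\geq F$).

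The step I expect to require the most care is the passage from the two conjugation identities to the closure relations in the possibly improper setting. Biconjugation coincides with closure only up to the degenerate $-\infty$ behaviour of convex functions, so I would invoke the normality of $F$ from Theorem \ref{Theor-basic}(b) together with the fact, recorded in the Preliminaries, that each partial function of a representative function is closed convex, in order to guarantee that $(-F(\cdot,y))^{\ast\ast}$ and $(\widetilde{F}(x,\cdot))^{\ast\ast}$ genuinely equal the respective closures rather than collapsing to $-\infty$.
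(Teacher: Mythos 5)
Your proposal is correct and follows essentially the same route as the paper: the same two partial conjugation identities $(\widetilde{F}(x,\cdot))^{\ast}=\varphi^{\ast}(\cdot,x)$ and $(-F(\cdot,y))^{\ast}=\varphi(y,\cdot)$, followed by biconjugation to get \eqref{F-Fhat} and the formal deduction of the remaining claims. The only cosmetic difference is that you re-derive $(-F(\cdot,y))^{\ast}=\varphi(y,\cdot)$ directly from \eqref{F} and $\varphi^{\ast\ast}=\varphi$, where the paper simply cites $\varphi=\varphi_{F}$ from Theorem \ref{Theor-basic}(d).
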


\begin{proof}
The proof that $\widetilde{F}$ is a saddle function is similar to the proof of
the analogous assertion for $F$ in Theorem \ref{Theor-basic}$(a)$. By Theorem
\ref{Theor-basic} and relation (\ref{Fitzpatrick-transform}),
\[
\varphi(y,y^{\ast})=\varphi_{F}(y,y^{\ast})=\left(  -F(\cdot,y)\right)
^{\ast}(y^{\ast}),
\]
and therefore $\widetilde{F}$ is also given by the formula
\begin{equation}
-\widetilde{F}(x,y)=\left(  -F(\cdot,y)\right)  ^{\ast \ast}(x) \label{F-hat}%
\end{equation}
i.e., $\widetilde{F}=\mathrm{cl}_{1}F.$ In addition, in view of
\eqref{F-hat_form},
\begin{align*}
\varphi^{\ast}(x^{\ast},x)  &  =\sup_{(y,y^{\ast})\in X\times X^{\ast}}\{
\left \langle x^{\ast},y\right \rangle +\left \langle y^{\ast},x\right \rangle
-\varphi(y,y^{\ast})\} \\
&  =\sup_{y\in X}\left \{  \left \langle x^{\ast},y\right \rangle +\sup_{y^{\ast
}\in X^{\ast}}\left(  \left \langle y^{\ast},x\right \rangle -\varphi(y,y^{\ast
})\right)  \right \} \\
&  =\sup_{y\in X}\left \{  \left \langle x^{\ast},y\right \rangle -\widetilde
{F}(x,y)\right \}  =\left(  \widetilde{F}(x,\cdot)\right)  ^{\ast}(x^{\ast}).
\end{align*}
Therefore,
\[
F(x,y)=\left(  \varphi^{\ast}(\cdot,x\right)  )^{\ast}(y)=\left(
\widetilde{F}(x,\cdot)\right)  ^{\ast \ast}(y)=\mathrm{cl}_{2}\widetilde
{F}(x,y).
\]

The inequality $F\leq \widetilde{F}$ follows from $F=\mathrm{cl}_{2}%
\widetilde{F}$.

The remaining assertions of the proposition are immediate consequences of
equalities (\ref{F-Fhat}).
\end{proof}

The next proposition summarizes some results about $\widetilde{F}$, similar to
Theorem \ref{Theor-basic}.

\begin{proposition}
Let $T$ be a maximal monotone operator, $\varphi \in \mathcal{H}(T)$ and
$\widetilde{F}$ be defined by (\ref{F-hat_form}). Then:

\begin{enumerate}
\item[(a)] $\widetilde{F}$ is a saddle function such that $\mathrm{cl}%
_{1}\widetilde{F}=\widetilde{F}.$

\item[(b)] $-\widetilde{F}^{t}$ is normal, and $\mathrm{co}D(T)\subseteq
D(-\widetilde{F}^{t})\subseteq \overline{\mathrm{co}}D(T),$ where
$\widetilde{F}^{t}(x,y)=\widetilde{F}(y,x);$

\item[(c)] $\varphi_{\widetilde{F}}=\varphi$ and $\varphi^{\widetilde{F}%
}=(\varphi^{\ast})^{t}$;

\item[(d)] $T=A^{\widetilde{F}}=\,^{\widetilde{F}}\!A$.
\end{enumerate}
\end{proposition}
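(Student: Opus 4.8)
The plan is to lean almost entirely on the equivalence $F\sim\widetilde{F}$ already established in Proposition \ref{F-closed}. By Proposition \ref{Prop-ex-rem}, equivalent saddle functions share the same four associated objects $\varphi_{\bullet}$, $\varphi^{\bullet}$, $A^{\bullet}$ and $\,^{\bullet}\!A$; so once I invoke $F\sim\widetilde{F}$, parts (c) and (d) follow by reading off the corresponding values for $F$ from Theorem \ref{Theor-basic}, and part (a) costs only a one-line closure argument. The only genuinely new content is the domain computation in (b), which I expect to be the one place requiring care.

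Concretely, for (a) I would note that the saddle-function property of $\widetilde{F}$ is already recorded in Proposition \ref{F-closed}; the extra claim $\mathrm{cl}_{1}\widetilde{F}=\widetilde{F}$ is immediate because, for each fixed $y$, the section $\widetilde{F}(\cdot,y)=-(\varphi(y,\cdot))^{\ast}$ is minus a Fenchel conjugate, hence closed concave in the first variable (equivalently, apply idempotence of $\mathrm{cl}_{1}$ to the identity $\widetilde{F}=\mathrm{cl}_{1}F$ from Proposition \ref{F-closed}). For (c), since $F\sim\widetilde{F}$, Proposition \ref{Prop-ex-rem} gives $\varphi_{\widetilde{F}}=\varphi_{F}$ and $\varphi^{\widetilde{F}}=\varphi^{F}$, and Theorem \ref{Theor-basic}(d) then yields $\varphi_{\widetilde{F}}=\varphi$ and $\varphi^{\widetilde{F}}=(\varphi^{\ast})^{t}$. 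For (d), the same equivalence gives $A^{\widetilde{F}}=A^{F}$ and $\,^{\widetilde{F}}\!A=\,^{F}\!A$, which equal $T$ by Theorem \ref{Theor-basic}(c).

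For (b), the part that will take the most bookkeeping, I would set $G:=-\widetilde{F}^{t}$ and first unwind the transpose and sign: using the defining formula (\ref{F-hat_form}), $G(x,y)=-\widetilde{F}(y,x)=(\varphi(x,\cdot))^{\ast}(y)$. For fixed $x$ this is a Fenchel conjugate, hence closed and convex in $y$. The decisive observation is that every representative function dominates the duality product, $\varphi\geq\pi$, so $\varphi$ never takes the value $-\infty$; consequently the section $\varphi(x,\cdot)$ is either identically $+\infty$ (exactly when $x\notin P_{1}\operatorname*{dom}\varphi$), in which case $G(x,\cdot)\equiv-\infty$, or else proper, in which case its conjugate $G(x,\cdot)$ is proper and so is nowhere $-\infty$. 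This shows $G(x,y)=-\infty$ if and only if $x\notin P_{1}\operatorname*{dom}\varphi$, i.e. $-\widetilde{F}^{t}$ is normal with $D(-\widetilde{F}^{t})=P_{1}\operatorname*{dom}\varphi$. The inclusions $\operatorname*{co}D(T)\subseteq P_{1}\operatorname*{dom}\varphi\subseteq\overline{\operatorname*{co}}D(T)$ are then exactly Proposition \ref{domain(T)}(a)--(b), which applies because $T$ is maximal monotone.

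The main obstacle, such as it is, is purely one of keeping the indexing straight in (b): in contrast with Theorem \ref{Theor-basic}(b), where $D(F)$ was governed by $(\varphi^{\ast})^{t}$, here the relevant section is taken directly in $\varphi$, so the normality set is $P_{1}\operatorname*{dom}\varphi$ rather than $P_{1}\operatorname*{dom}(\varphi^{\ast})^{t}$; once the transpose, the outer minus sign, and the bound $\varphi\geq\pi$ (which rules out $-\infty$ values in the proper sections) are handled correctly, everything else reduces to the cited results.
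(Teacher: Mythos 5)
Your proposal is correct and follows essentially the same route as the paper: parts (a), (c), (d) are read off from the equivalence $F\sim\widetilde{F}$ via Propositions \ref{Prop-ex-rem} and \ref{F-closed} together with Theorem \ref{Theor-basic}, and part (b) repeats the argument of Theorem \ref{Theor-basic}(b) with $\varphi$ in place of $(\varphi^{\ast})^{t}$, yielding $D(-\widetilde{F}^{t})=P_{1}\operatorname*{dom}\varphi$ and the inclusions from Proposition \ref{domain(T)}. The paper merely states this in one line; your write-up fills in the same steps, including the correct observation that the bound $\varphi\geq\pi$ makes the nonempty sections of $\varphi$ proper.
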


\begin{proof}
Parts $(a)$, $(c)$ and $(d)$ follow from Propositions \ref{Prop-ex-rem} and
\ref{F-closed}. The proof of $(b)$ follows the same steps as the proof of
Theorem \ref{Theor-basic}$(b)$.
\end{proof}

In the next result, we prove that the set of all saddle functions that are
equivalent to $F$ is exactly the set of saddle functions between $F$ and
$\widetilde{F}$. Consequently, the bifunctions $F$ and $\widetilde{F}$ play
the role of maximal and minimal element in the class of saddle functions
satisfying the equalities%
\begin{equation}
\varphi_{H}=\varphi,\quad \varphi^{H}=(\varphi^{\ast})^{t}.\label{Aphi}%
\end{equation}

\begin{proposition}
Let $H$ be a saddle function. Then $H$ satisfies \eqref{Aphi} if and only if
$F\leq H\leq \widetilde{F}.$
\end{proposition}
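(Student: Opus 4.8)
The plan is to recast condition \eqref{Aphi} as the single requirement that $H$ be equivalent to $F$, and then to recognize the class of saddle functions equivalent to a fixed one as exactly the order interval between its lower and upper closures. First I would observe that Theorem \ref{Theor-basic}$(d)$ gives $\varphi_{F}=\varphi$ and $\varphi^{F}=(\varphi^{\ast})^{t}$, so that \eqref{Aphi} is equivalent to the pair of equalities $\varphi_{H}=\varphi_{F}$ and $\varphi^{H}=\varphi^{F}$. By Proposition \ref{Prop-ex-rem}, for saddle functions this pair holds precisely when $H\sim F$. Thus the whole statement reduces to proving that, for a saddle function $H$, one has $H\sim F$ if and only if $F\leq H\leq \widetilde{F}$.

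For the forward implication I would use the elementary facts that closing a convex function produces a minorant and closing a concave function produces a majorant. If $H\sim F$, then $\mathrm{cl}_{2}H=\mathrm{cl}_{2}F=F$ by Theorem \ref{Theor-basic}$(a)$; since $\mathrm{cl}_{2}H\leq H$ (the convex closure in the second variable lies below $H$), this yields $F\leq H$. Dually, $\mathrm{cl}_{1}H=\mathrm{cl}_{1}F=\widetilde{F}$ by Proposition \ref{F-closed}, and since $\mathrm{cl}_{1}H\geq H$ (the concave closure in the first variable lies above $H$), this gives $H\leq \widetilde{F}$.

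For the converse I would apply the partial closures to the chain $F\leq H\leq \widetilde{F}$ and exploit their monotonicity. Closing in the second variable is order preserving, so $\mathrm{cl}_{2}F\leq \mathrm{cl}_{2}H\leq \mathrm{cl}_{2}\widetilde{F}$; since $\mathrm{cl}_{2}F=F$ (Theorem \ref{Theor-basic}$(a)$) and $\mathrm{cl}_{2}\widetilde{F}=F$ (Proposition \ref{F-closed}, relation \eqref{F-Fhat}), both ends equal $F$ and hence $\mathrm{cl}_{2}H=F=\mathrm{cl}_{2}F$. Likewise, closing in the first variable is order preserving, so $\mathrm{cl}_{1}F\leq \mathrm{cl}_{1}H\leq \mathrm{cl}_{1}\widetilde{F}$; using $\mathrm{cl}_{1}F=\widetilde{F}$ (Proposition \ref{F-closed}) and $\mathrm{cl}_{1}\widetilde{F}=\mathrm{cl}_{1}(\mathrm{cl}_{1}F)=\mathrm{cl}_{1}F=\widetilde{F}$ by idempotency of the closure, both ends equal $\widetilde{F}$, whence $\mathrm{cl}_{1}H=\widetilde{F}=\mathrm{cl}_{1}F$. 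Therefore $\mathrm{cl}_{i}H=\mathrm{cl}_{i}F$ for $i=1,2$, i.e. $H\sim F$.

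The routine core of the argument is thus just the monotonicity together with the minorant/majorant properties of the two one-variable closure operations; the only point that needs care is the bookkeeping of directions, namely that $\mathrm{cl}_{2}$ (a convex closure) decreases the function while $\mathrm{cl}_{1}$ (a concave closure) increases it, and that the two extreme functions $F$ and $\widetilde{F}$ arise as the common values $\mathrm{cl}_{2}F=\mathrm{cl}_{2}\widetilde{F}$ and $\mathrm{cl}_{1}F=\mathrm{cl}_{1}\widetilde{F}$. No genuine obstacle remains once the reduction to $H\sim F$ in the first step is in place; indeed this result is essentially a restatement, in the present setting, of the classical description of an equivalence class of saddle functions by its lower and upper closures.
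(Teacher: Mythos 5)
Your proof is correct and follows essentially the same route as the paper: reduce \eqref{Aphi} to $H\sim F$ via Proposition \ref{Prop-ex-rem} and Theorem \ref{Theor-basic}$(d)$, then use the monotonicity of $\mathrm{cl}_{1},\mathrm{cl}_{2}$ together with $\mathrm{cl}_{2}F=\mathrm{cl}_{2}\widetilde{F}=F$ and $\mathrm{cl}_{1}F=\mathrm{cl}_{1}\widetilde{F}=\widetilde{F}$ in one direction, and the minorant/majorant properties $\mathrm{cl}_{2}H\leq H\leq\mathrm{cl}_{1}H$ in the other. The only difference is that you spell out the supporting equalities slightly more explicitly than the paper does.
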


\begin{proof}
It is easy to see that every saddle function $H$ such that $F\leq
H\leq \widetilde{F}$ is equivalent to $F$ (because $\mathrm{cl}_{1}%
F\leq \mathrm{cl}_{1}H\leq \mathrm{cl}_{1}\widetilde{F}=\mathrm{cl}_{1}F,$ and
the same for $\mathrm{cl}_{2}$), hence it satisfies (\ref{Aphi}). Conversely,
if $H$ satisfies \eqref{Aphi}, then $H\sim F$. From the equalities
\[
\mathrm{cl}_{1}H=\mathrm{cl}_{1}{F}=\widetilde{F},\quad \mathrm{cl}%
_{2}H=\mathrm{cl}_{2}{F}={F},
\]
and since for every convex (concave) function the convex (concave) closure is
smaller (greater) than the function, we get that $F\leq H\leq \widetilde{F}.$
\end{proof}

We conclude by illustrating the particular case where $\varphi=\mathcal{F}%
_{T}.$ We will construct the saddle functions $F$ and $\tilde{F},$ whose
existence is part of Theorem \ref{Theor-basic} and Proposition \ref{F-closed},
and we will show how they are related to $G_{T}.$

In view of \eqref{Fitzpatrick-transform} and (\ref{FT}),
\begin{equation}
\mathcal{F}_{T}(x,x^{\ast})=\varphi_{G_{T}}(x,x^{\ast})=\left(  -G_{T}%
(\cdot,x)\right)  ^{\ast}(x^{\ast}). \label{FG}%
\end{equation}
Since the bifunction $G_{T}$ is not saddle, in general, let us consider the
bifunction $\hat{G}_{T}:X\times X\rightarrow \overline{\mathbb{R}}$ defined by
$\hat{G}_{T}(\cdot,y)=\mathrm{cv\,}G_{T}(\cdot,y)$, for each $y\in X$ (see
also \cite{Krauss-nonlin, AH2}).

Since $G_{T}(x,y)>-\infty$ is equivalent to $x\in D(T)$, $\hat{G}_{T}$ is
given by

\smallskip \noindent%
\[
\hat{G}_{T}(x,y):=\sup \{ \sum_{i=1}^{k}\alpha_{i}G_{T}(x_{i},y):\,x=\sum
_{i=1}^{k}\alpha_{i}x_{i},\,x_{i}\in D(T),\sum_{i=1}^{k}\alpha_{i}=1,\,
\alpha_{i}\geq0\}.
\]

By construction, $\hat{G}_{T}(\cdot,y)$ is concave; also, $\hat{G}_{T}%
(x,\cdot)$ is convex and closed, as a supremum of convex and closed functions.
Thus, $\hat{G}_{T}$ is a saddle function such that $\mathrm{cl}_{2}\hat{G}%
_{T}=\hat{G}_{T}$.

Since $T$ is monotone, we know that $G_{T}$ is monotone, thus%
\[
G_{T}(x,y)\leq-G_{T}(y,x),\qquad \forall(x,y)\in X\times X.
\]

If we take the convex hull with respect to $y$ of both sides we find%
\[
G_{T}(x,y)\leq-\hat{G}_{T}(y,x),\qquad \forall(x,y)\in X\times X.
\]

Now we take the concave hull with respect to $x$ of both sides and we deduce%
\[
\hat{G}_{T}(x,y)\leq-\hat{G}_{T}(y,x),\qquad \forall(x,y)\in X\times X.
\]

Consequently, $\hat{G}_{T}$ is monotone.

We have
\begin{align*}
\left(  \varphi_{G_{T}}\right)  ^{\ast}(x^{\ast},x)  &  =\sup_{(y,y^{\ast})\in
X\times X^{\ast}}\{ \left \langle x^{\ast},y\right \rangle +\left \langle
y^{\ast},x\right \rangle -\varphi_{G_{T}}(y,y^{\ast})\} \\
&  =\sup_{(y,y^{\ast})\in X\times X^{\ast}}\{ \left \langle x^{\ast
},y\right \rangle +\left \langle y^{\ast},x\right \rangle -\left(  -G_{T}%
(\cdot,y)\right)  ^{\ast}(y^{\ast})\} \\
&  =\sup_{y\in X}\left \{  \left \langle x^{\ast},y\right \rangle +\left(
-G_{T}(\cdot,y)\right)  ^{\ast \ast}(x)\right \} \\
&  =\sup_{y\in X}\left \{  \left \langle x^{\ast},y\right \rangle -\mathrm{cl}%
_{1}\hat{G}_{T}(x,y)\right \} \\
&  =\left(  \mathrm{cl}_{1}\hat{G}_{T}(x,\cdot)\right)  ^{\ast}(x^{\ast})
\end{align*}
thus%
\[
F(x,y)=\left(  \left(  \varphi_{G_{T}}\right)  ^{\ast}(\cdot,x)\right)
^{\ast}(y)=\left(  \mathrm{cl}_{1}\hat{G}_{T}(x,\cdot)\right)  ^{\ast \ast
}(y)=\mathrm{cl}_{2}\mathrm{cl}_{1}\hat{G}_{T}(x,y).
\]

That is, $F$ is the \textquotedblleft lower closure\textquotedblright \ of
$\hat{G}_{T}$ \cite{Krauss-Roum}. Note that by Proposition \ref{Fmon}, $F$ is
monotone, because $\mathcal{F}_{T}(x,x^{\ast})\leq \sigma_{T}(x^{\ast},x).$

Since $\hat{G}_{T}$ is convex and closed in the second variable,
$\mathrm{cl}_{2}\hat{G}_{T}=\hat{G}_{T}$. Using that for every saddle function
$H$ the saddle function $\mathrm{cl}_{1}\mathrm{cl}_{2}H$ is upper closed
\cite{Rock-conv, Krauss-Roum} we find
\[
\widetilde{F}=\mathrm{cl}_{1}F=\mathrm{cl}_{1}\mathrm{cl}_{2}\mathrm{cl}%
_{1}\hat{G}_{T}=\mathrm{cl}_{1}\mathrm{cl}_{2}\mathrm{cl}_{1}\mathrm{cl}%
_{2}\hat{G}_{T}=\mathrm{cl}_{1}\mathrm{cl}_{2}\hat{G}_{T}=\mathrm{cl}_{1}%
\hat{G}_{T}.
\]
Thus, $\widetilde{F}$ is the \textquotedblleft upper closure" of $\hat{G}_{T}$.

\end{document}